\providecommand{\U}[1]{\protect\rule{.1in}{.1in}}
\providecommand{\U}[1]{\protect\rule{.1in}{.1in}}
\providecommand{\U}[1]{\protect\rule{.1in}{.1in}}
\providecommand{\U}[1]{\protect\rule{.1in}{.1in}}
\providecommand{\U}[1]{\protect\rule{.1in}{.1in}}
\theoremstyle{theorem}
\newtheorem{Theorem}{Theorem}[section]
\newtheorem*{theoremn}{Theorem}
\newtheorem{Proposition}[Theorem]{Proposition}
\newtheorem{Corollary}[Theorem]{Corollary}
\theoremstyle{definition}
\newtheorem{Definition}[Theorem]{Definition}
\newtheorem{Remark}[Theorem]{Remark}
\numberwithin{equation}{section}
\newcommand{\Pic}{\operatorname{Pic}}
\newcommand{\p}{{\mathbb P}}
\renewcommand{\a}{{\`a}}
\newcommand{\Mor}{\operatorname{Mor}}
\def\leq{\leqslant}
\def\geq{\geqslant}
\def\bibaut#1{{\sc #1}}
\def\phi{\varphi}
\def\ro[#1]{{\textcolor{red}{#1}}}
\renewcommand{\a}{\`a }
\newcommand{\arXiv}[1]{\href{http://arxiv.org/abs/#1}{arXiv:#1}}
\begin{document}

\begin{abstract}
In this paper we give for any integer $l\geq 2$ a numerical criterion ensuring the existence of a chain of length $l$ of lines through two general points of an irreducible variety $X \subset \mathbb{P}^{N}$, involving the degrees and the number of homogeneous polynomials defining $X$. We show that our criterion is sharp.
\end{abstract}

\subjclass[2011]{14M99, 14N05, 14J45, 14M07}
\keywords{Conic-connected varieties, covered by lines varieties, rationally chain connected varieties}
\title[Varieties connected by chains of lines]{Varieties connected by chains of lines}
\author[Simone Marchesi]{Simone Marchesi}
\address{\sc Simone Marchesi\\
Dipartimento di Matematica "Federigo Enriques"\\
Universit\a degli Studi di Milano\\
via Cesare Saldini  50\\
20133 Milano\\
Italy}
\address{Departamento de \'{A}lgebra\\
Facultad de Ciencias Matem\'{a}ticas\\
Universidad Complutense de Madrid\\
Plaza de las Ciencias  3\\
28040 Madrid\\
Spain}
\email{simone.marchesi@unimi.it, smarches@mat.ucm.es}
\author[Alex Massarenti]{Alex Massarenti}
\address{\sc Alex Massarenti\\
SISSA\\
via Bonomea 265\\
34136 Trieste\\ Italy}
\email{alex.massarenti@sissa.it}

\maketitle

\tableofcontents

\section*{Introduction}
\textit{Bonavero} and \textit{H\"oring}, in \cite{BH}, consider a smooth scheme theoretical complete intersection $X\subset\p^{N}$ and give a bound involving the degrees of the polynomials defining the variety that grants the conic-connectedness. However their result ensures the existence of a smooth conic that in general is weaker than the existence of a singular conic. Indeed from classical arguments of deformations of chains of rational curves we have that a singular conic through two general points on a smooth variety can be deformed into a smooth conic. The existence of a smooth conic $f:\p^{1}\rightarrow X$ through two general points on a projective variety does not imply the existence of a singular conic. This is true if $\dim_{[f]}(\Mor(\p^{1},X;f_{|\{0,\infty\}}))\geq 2$ by Mori's Bend-and-Break lemma \cite[Proposition 3.2]{De}.\\
We start from this result and consider the more general case of a non necessarily smooth variety $X\subset\p^{N}$, set theoretically defined by homogeneous polynomials; such varieties do not need to be a complete intersection. 
In \cite[Theorem 4.4]{MMT} the authors and \textit{Saeed Tafazolian} give a numerical criterion ensuring the existence of a chain of length two of lines through two general points of a variety $X\subset\mathbb{P}^{N}$. 
In section \ref{CL} we generalize \cite[Theorem 4.4]{MMT} and we obtain the following result (Theorem \ref{RC}). 
\begin{theoremn}
Let $X\subset\p^{N}$ be a variety set theoretically defined by homogeneous polynomials $G_{i}$ of degree $d_{i}$, for $i = 1,..,m$, and let $l\geq 2$ be an integer. If
$$\sum_{i=1}^{m}d_{i}\leq \frac{N(l-1)+m}{l}$$
then $X$ is rationally chain connected by chains of lines of length at most $l$.\\ 
In particular if $X$ is smooth and the above inequality is satisfied then $X$ is rationally connected by rational curves of degree at most $l$.
\end{theoremn}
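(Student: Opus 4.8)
The plan is to realise a length-$l$ chain joining two prescribed points as a point of an explicit \emph{configuration variety} inside the product $X^{l-1}$, and then to prove that this variety is nonempty by a completeness-and-positivity argument whose numerics are dictated exactly by the stated inequality. Accordingly I first set up the configuration variety. Fix $x,y\in X$ and regard an ordered chain of length $l$ joining them as a tuple of intermediate points $(p_1,\dots,p_{l-1})\in X^{l-1}$ for which the $l$ links $\overline{xp_1},\overline{p_1p_2},\dots,\overline{p_{l-2}p_{l-1}},\overline{p_{l-1}y}$ all lie on $X$, where $\overline{ab}$ is the line spanned by $a,b$ (if $p_i=p_{i+1}$ the corresponding link degenerates and the chain has length $<l$, which is allowed). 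To encode "$\overline{p_ip_{i+1}}\subset X$" I parametrise the line by $sp_i+tp_{i+1}$ and expand $G_j(sp_i+tp_{i+1})=\sum_r a_{j,r}(p_i,p_{i+1})\,s^{d_j-r}t^r$; the extreme coefficients $a_{j,0}=G_j(p_i)$ and $a_{j,d_j}=G_j(p_{i+1})$ vanish automatically since the endpoints lie on $X$, so each link imposes exactly $\sum_{j=1}^m(d_j-1)$ equations, each $a_{j,r}$ being a form of bidegree $(d_j-r,r)$ in the two relevant factors. Writing $s:=\big(\sum_{j=1}^m d_j\big)-m$, the $l$ links together cut out the configuration variety inside $X^{l-1}$ by $ls$ equations.

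Next comes the dimension count, which is what makes the criterion appear. Since $X$ is set-theoretically an intersection of $m$ hypersurfaces, $\dim X\geq N-m$, whence $\dim X^{l-1}\geq(l-1)(N-m)$. A direct manipulation shows that $\sum_j d_j\leq\frac{N(l-1)+m}{l}$ is equivalent to $ls\leq(l-1)(N-m)$, so the number $ls$ of link-equations does not exceed $\dim X^{l-1}$ and the configuration variety has expected dimension $\geq(l-1)(N-m)-ls\geq 0$. Thus the whole statement reduces to showing that this variety is nonempty.

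The nonemptiness is the step I expect to be the main obstacle. Because $X^{l-1}$ is complete, every link-equation defines an effective divisor; were these divisors ample, I could intersect them one at a time, using that an ample divisor meets every positive-dimensional subvariety, and conclude from $ls\leq\dim X^{l-1}$. The difficulty is that on the \emph{product} $X^{l-1}$ an individual link is not ample: a link involves only two of the $l-1$ factors, so its class has degree $0$ along all the others, and an effective non-ample divisor may be disjoint from a complete positive-dimensional variety (e.g.\ two distinct fibres of a ruling), so the naive one-at-a-time intersection can fail. The redeeming structural fact is that each factor $p_i$ occurs in exactly two links, each contributing strictly positive degree in that factor; hence the \emph{sum} of all link-divisors is ample on $X^{l-1}$. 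I would therefore prove nonemptiness by exploiting this global positivity, arguing through the positivity of the relevant intersection numbers (equivalently, that ampleness of the total class forces the $ls$ chosen divisors to meet in a common point), so that such a point yields the desired chain, and in fact for \emph{every} pair $x,y\in X$.

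Finally, the "in particular" clause follows by the smoothing principle recalled in the introduction: on a smooth $X$ a connected chain of $l$ lines through two general points is a connected nodal rational curve of arithmetic genus $0$ and degree $l$, which deforms to an irreducible rational curve of degree at most $l$ through the same two points; hence $X$ is rationally connected by rational curves of degree at most $l$.
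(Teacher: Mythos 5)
Your construction of the configuration space and your dimension count do parallel the paper's setup (the paper also encodes the chain as a point of a product, counts exactly the same link equations, and checks that the total number of conditions, $l\sum_i d_i - m$, is at most the ambient dimension $N(l-1)$ precisely under the stated inequality). But the nonemptiness step, which you yourself flag as the main obstacle, contains a genuine gap: the principle you invoke --- that ampleness of the \emph{sum} of the link-divisor classes forces the $ls$ individual divisors to have a common point once their number does not exceed the dimension --- is false. On $\p^{2}\times\p^{1}$ take $D_{1}=\p^{2}\times\{p\}$, $D_{2}=\p^{2}\times\{q\}$ with $p\neq q$, and $D_{3}$ a divisor of class $h_{1}$: there are $3=\dim$ divisors, the total class $h_{1}+2h_{2}$ is ample, every individual class is nef, yet $D_{1}\cap D_{2}=\emptyset$ (and correspondingly $h_{1}h_{2}^{2}=0$ in the Chow ring). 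What is actually needed is the nonvanishing of the specific intersection class
$$h_{1}^{\sum_i d_{i}}\,h_{2}^{m}\cdots h_{l-2}^{m}\,h_{l-1}^{\sum_i d_{i}}\,(h_{1}+h_{2})^{\sum_i d_{i}-m}\cdots(h_{l-2}+h_{l-1})^{\sum_i d_{i}-m}$$
in $A^{*}(\p^{N}_{1}\times\cdots\times\p^{N}_{l-1})$; since an intersection product of effective divisors is supported on their set-theoretic intersection, a nonzero class yields a common point. Nonvanishing is \emph{not} automatic from the raw count $ls\leq(l-1)(N-m)$: one must exhibit, in the expansion of the product above, a monomial $h_{1}^{e_{1}}\cdots h_{l-1}^{e_{l-1}}$ with every $e_{k}\leq N$, i.e.\ distribute the mixed bidegree $\sum_i d_{i}-m$ of each middle link between its two factors in a balanced way. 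The paper does exactly this, choosing $\overline{j_{k}}=\lfloor\frac{k}{l-1}(\sum_{i=1}^{m}d_{i}-m)\rfloor$ and verifying, using the hypothesis a second time and factor by factor, that each resulting exponent stays below $N+1$. This combinatorial balancing is the heart of the proof and is entirely missing from your argument; global positivity of the total class cannot replace it, as the example shows.

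A secondary structural issue: you work inside $X^{l-1}$ with $X$ possibly singular, so any intersection-number computation there would require knowing the multidegrees of $X^{l-1}$ in the ambient multiprojective space, which you do not control. The paper sidesteps this by staying in $\p^{N}_{1}\times\cdots\times\p^{N}_{l-1}$ and adjoining the equations $G_{i}(p^{k})=0$ at the intermediate points to the list of divisors to be intersected --- this is why its count has $\sum_i d_{i}$ (rather than your $\sum_i(d_{i}-1)$) conditions at the extreme factors and $m$ extra conditions at each middle factor, for the same total $l\sum_i d_{i}-m\leq N(l-1)$. Your final smoothing step for the ``in particular'' clause is fine and agrees with the paper's (Debarre, Proposition 4.24), but as it stands the core existence claim is unproven.
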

Finally we prove the sharpness of this result considering a hypersurface $X_{l+1}$ of degree $l+1$ in $\mathbb{P}^{l+2}$.

\section{Notation and Preliminaries}

We work over the complex field. Throughout this paper we denote by $X\subset \p^{N}$ an irreducible variety of dimension $n\geq 1$. We assume $X$ to be non-degenerate of codimension $c$, so that $N = n+c$.

\subsubsection*{Prime Fano, covered by lines and conic connected varieties}\label{PF}
Let $x \in X\subset \p^N$ be a general point. We denote by $\mathcal{L}_{x}$ the (possibly empty) variety of lines
through $x$, contained in $X$. Note that $\mathcal{L}_{x}$ is embedded in the space of tangent directions at $x$, that is $\mathcal{L}_{x}\subseteq \p(t_xX^*)=\p^{n-1}$, where $t_xX$ denotes the affine embedded Zariski tangent space at $x$.\\ 
We denote by $a:= \dim(\mathcal{L}_{x})$ the dimension of $\mathcal{L}_{x}$. We say that $X$ is \textit{covered by lines} if $\mathcal{L}_{x}\neq \emptyset$ for $x\in X$ general. When $\mathcal{L}_{x}$ is irreducible, it can proved that $a = \deg(\mathcal{N}_{l/X})$, where $l$ is a line in $X$ through $x$, and $\mathcal{N}_{l/X}$ is its normal bundle. When $a\geq \frac{n-1}{2}$, $\mathcal{L}_x\subset \p^{n-1}$ is smooth and irreducible; if, moreover, $\Pic(X)$ is cyclic, it is also non-degenerate, see \cite{Hwang}.\\
Recall that $X\subset \p^N$ is a \textit{prime Fano variety of index $i(X)$} if its Picard group is generated by the class $H$ of a hyperplane section and $-K_{X}=i(X)H$ for some positive integer $i(X)$. By the work of Mori, see \cite{Mori}, if $i(X) > \frac{n+1}{2}$ then $X$ is covered by lines.\\
A variety $X\subset \p^{N}$ is called a \textit{conic-connected} $(CC)$ variety if for $x,y \in X$ general points there is a conic $C_{x,y}$ passing through $x,y$ and contained in $X$.

\subsubsection*{Loci of Chains}\label{LC}
Let $X$ be a variety covered by lines and let $x\in X$ be a general point. We define the loci determined on $X$ by chains of lines through $x$ as follows.

\begin{Definition}
The locus of lines in $X$ though $x$ is defined as 
$$\mathfrak{Loc}_{1}(x) = \bigcup_{[L]\in \mathcal{L}_{x}}L$$
and the locus of chains of lines of length $l$ in $X$ though $x$ is defined recursively as
$$\mathfrak{Loc}_{l}(x) = \overline{\bigcup_{[L]\in\mathcal{L}_{y} \; | \; y\;\in\;\mathfrak{Loc}_{l-1}(x)\; \rm{is \; a \; general \; point}}L}.$$
\end{Definition} 

We denote by $d_{l}$ the maximal dimension of the irreducible components of $\mathfrak{Loc}_{l}(x)$. If there exists an integer $l$ such that $d_{l} = \dim(X)$ but $d_{l-1} < \dim(X)$ we say that $X$ has \textit{length} $l$. In particular a variety of length $2$ is conic-connected.

In \cite{HK} \textit{Hwang} and \textit{Kebekus} give a lower bound on $d_{l}$ under the irreducibility assumption on $\mathcal{L}_{x}$ for $x\in X$ general point. However their proof work even without this irreducibility assumption. The following theorem can be found in \cite[Theorem 4.6]{Wa}.

\begin{Theorem}\label{Wa}
Let $X$ be a prime Fano variety of dimension $n\geq 3$. Then we have
\begin{center}
$d_{1} = \dim(\mathcal{L}_{x})+1$ and $d_{l} \leq l(\dim(\mathcal{L}_{x})+1)$ for each $l\geq 1$.
\end{center}
\end{Theorem}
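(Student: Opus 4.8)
The plan is to prove the two assertions separately, the equality $d_1=\dim(\mathcal{L}_x)+1$ serving simultaneously as the base of the induction that produces the upper bound. For the equality I would first form the universal line over $\mathcal{L}_x$, namely the incidence variety $\mathcal{U}_x=\{([L],p):[L]\in\mathcal{L}_x,\ p\in L\}$, with its two projections $\pi\colon\mathcal{U}_x\to\mathcal{L}_x$ and $\mathrm{ev}\colon\mathcal{U}_x\to X$. Since $\pi$ exhibits $\mathcal{U}_x$ as a $\p^1$-bundle over $\mathcal{L}_x$, one has $\dim\mathcal{U}_x=\dim(\mathcal{L}_x)+1$, and because $\mathrm{ev}(\mathcal{U}_x)=\mathfrak{Loc}_1(x)$ this already gives $d_1\le\dim(\mathcal{L}_x)+1$. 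For the reverse inequality I would use that two distinct points of $\p^N$ span a unique line: for $y\in\mathfrak{Loc}_1(x)$ with $y\neq x$ the fibre $\mathrm{ev}^{-1}(y)$ is the single pair $([L],y)$, where $L$ is the line joining $x$ and $y$. Hence $\mathrm{ev}$ is generically injective on each irreducible component of $\mathcal{U}_x$, so the image of such a component has dimension equal to that of the component; taking the maximum over the components of $\mathcal{L}_x$ yields $d_1=\dim(\mathcal{L}_x)+1$. The same computation, with $x$ replaced by an arbitrary $y\in X$, shows that $\mathfrak{Loc}_1(y)$ has dimension $\dim(\mathcal{L}_y)+1$, a fact I will use repeatedly.

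Next I would prove the upper bound by induction on $l$, writing $a=\dim(\mathcal{L}_x)$ for $x$ general. The case $l=1$ is the equality just established. For the inductive step, fix an irreducible component $W$ of $\mathfrak{Loc}_{l-1}(x)$ of maximal dimension $d_{l-1}$ and consider the incidence variety $I_W=\overline{\{(y,p):y\in W\text{ general},\ p\in\mathfrak{Loc}_1(y)\}}$, with its projections to $W$ and to $X$; by the recursive definition the union over all such $W$ of the images of the $I_W$ in $X$ is exactly $\mathfrak{Loc}_l(x)$. Crucially, the construction only sweeps lines through \emph{general} points $y\in W$, so the relevant fibres of $I_W\to W$ are the loci $\mathfrak{Loc}_1(y)$ for $y$ general in $W$, each of dimension $\dim(\mathcal{L}_y)+1$. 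Provided the general such $y$ satisfies $\dim(\mathcal{L}_y)=a$, I obtain $\dim\mathfrak{Loc}_l(x)\le\dim I_W\le\dim W+(a+1)\le d_{l-1}+(a+1)\le(l-1)(a+1)+(a+1)=l(a+1)$, which is the claim.

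The hard part will be to control $\dim(\mathcal{L}_y)$ for $y$ a general point of the chain locus $\mathfrak{Loc}_{l-1}(x)$. The function $y\mapsto\dim(\mathcal{L}_y)$ is upper semicontinuous and equals $a$ on a dense open set $U$, its complement $B=\{y:\dim(\mathcal{L}_y)>a\}$ being a proper closed subset precisely because $x$ is general; note that $B$ can genuinely be nonempty (for instance at Eckardt-type points, where extra lines concentrate). The delicate point, and the heart of the matter, is that the maximal components $W$ used at each step are not contained in the jumping locus $B$, which is what legitimizes choosing $y\in W\cap U$ and concluding $\dim(\mathcal{L}_y)=a$. I would secure this by a parallel induction showing that every component of $\mathfrak{Loc}_l(x)$ meets $U$: for $l=1$ this holds because $x$ is general, so a general line through $x$ is a general line of $X$ whose general point lies in $U$; for the inductive step one must check that through a general point $y\in W\cap U$ a general line of $\mathcal{L}_y$ is not contained in $B$, so that the newly swept components again meet $U$. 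Verifying this last genericity statement — that the chains propagate through the locus where the family of lines has the expected dimension, rather than becoming trapped inside $B$ — is the real obstacle, and it is exactly the content underlying \cite[Theorem 4.6]{Wa}; the irreducibility hypothesis on $\mathcal{L}_x$ is nowhere needed, since the entire argument is carried out one component at a time.
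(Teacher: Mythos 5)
This theorem is not proved in the paper at all: it is imported verbatim from \cite[Theorem 4.6]{Wa}, with only the remark that the Hwang--Kebekus argument survives without the irreducibility assumption on $\mathcal{L}_x$. So your attempt can only be measured on its own merits. Your first part is complete and correct: the universal family $\mathcal{U}_x\to\mathcal{L}_x$ is a $\p^1$-bundle, and $\mathrm{ev}$ is injective away from $\mathrm{ev}^{-1}(x)$ because two distinct points of $\p^N$ lie on a unique line, giving $d_1=\dim(\mathcal{L}_x)+1$ component by component; the same computation at an arbitrary $y$ is indeed what the induction needs. The inductive dimension count is also the right mechanism, and you correctly isolate the one nontrivial point: that the components of $\mathfrak{Loc}_{l-1}(x)$ are not trapped in the jumping locus $B=\{y:\dim(\mathcal{L}_y)>a\}$.

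But you then leave exactly that point unproved, calling it ``the real obstacle'' and ``exactly the content underlying \cite[Theorem 4.6]{Wa}'' --- in a blind proof of that very theorem this is circular, so as written the argument has a genuine gap at its crux. The gap is real but easily closed, and your parallel induction is the right frame; the observation you miss is simply that $y\in\mathfrak{Loc}_1(y)$, i.e.\ every attachment point lies on the lines it emits. Concretely, form $\mathcal{I}=\{(y,[L]): y\in W\cap U,\ [L]\in\mathcal{L}_y\}$ and the universal line $I\to\overline{\mathcal{I}}$, a $\p^1$-bundle carrying the tautological section $(y,[L])\mapsto(y,[L],y)$. The irreducible components of $I$ are the preimages of those of $\overline{\mathcal{I}}$, so each contains its section; hence each irreducible component of the new sweep $S(W)=\mathrm{ev}(I)$ contains attachment points $y\in U$, and being irreducible and meeting the open set $U$, its general point lies in $U$, which closes the induction. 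In particular the genericity statement you worried about --- that a general line of $\mathcal{L}_y$ is not contained in $B$ --- is vacuous: \emph{every} line through $y\in U$ meets $U$, at $y$ itself (likewise, for $l=1$ the cleaner justification is that every component of $\mathfrak{Loc}_1(x)$ is a cone with vertex $x\in U$). Note finally that nothing here uses the prime Fano hypothesis beyond guaranteeing the setting: uniqueness of the line through two points replaces the minimal-rational-curve theory that \cite{Wa} needs for curves of higher degree, which is consistent with, and explains, the paper's remark that irreducibility of $\mathcal{L}_x$ is dispensable.
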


In the proof of Theorem \ref{RC} it will be necessary to perform intersections in a product of projective spaces, so we recall briefly the structure of the Chow ring of a product, for more details see \cite{Fu}.

\subsubsection*{Chow ring of a product} 
Let us recall, in order to fix the notation, that for a cartesian product of projective spaces the Chow ring is given by
$$A^{*}(\mathbb{P}^{N_{1}}\times ...\times\mathbb{P}^{N_{k}})\cong\mathbb{Z}[h_{1},...,h_{k}]/(h_{1}^{N_{1}+1},...,h_{k}^{N_{k}+1})$$ 
where $h_{i}$ is the hyperplane class of $\mathbb{P}^{N_{i}}$. It follows that the class of subvariety $Z \subset \mathbb{P}^{N_{1}}\times ...\times\mathbb{P}^{N_{k}}$ can be written in the form
$$[Z]\: = \sum_{i_{1}+...+i_{k} = \dim(Z)}\lambda_{i_{1},...,i_{k}}\,h_{1}^{N_{1}-i_{1}}...h_{k}^{N_{k}-i_{k}},$$
where the $\lambda_{i_{1},...,i_{k}}$ are the multidegrees of $Z$.

\section{Chains of Lines}\label{CL}
We want to generalize \cite[Theorem 4.4]{MMT} giving conditions on the equations defining the variety that ensure the existence of a chain of lines of a prescribed length through two general points of $X$. In this case we have to perform a intersection in a product of projective spaces. 

\begin{Theorem}\label{RC}
Let $X\subset\p^{N}$ be a variety set theoretically defined by homogeneous polynomials $G_{i}$ of degree $d_{i}$, for $i = 1,..,m$, and let $l\geq 2$ be an integer. If
$$\sum_{i=1}^{m}d_{i}\leq \frac{N(l-1)+m}{l}$$
then $X$ is rationally chain connected by chains of lines of length at most $l$.\\ 
In particular if $X$ is smooth and the above inequality is satisfied then $X$ is rationally connected by rational curves of degree at most $l$.
\end{Theorem}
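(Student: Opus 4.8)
The plan is to fix two general points $x,y\in X$ and to look for the $l-1$ intermediate vertices of a chain of lines joining them, i.e. points $z_{1},\dots,z_{l-1}\in\p^{N}$ (with $z_{0}=x$, $z_{l}=y$) such that every segment $\overline{z_{k}z_{k+1}}$ is a line contained in $X$. Restricting a defining equation $G_{i}$ to such a segment and writing $G_{i}(sz_{k}+tz_{k+1})=\sum_{j=0}^{d_{i}}s^{d_{i}-j}t^{j}R_{i,j}(z_{k},z_{k+1})$, the condition $\overline{z_{k}z_{k+1}}\subset X$ is the simultaneous vanishing of the coefficients $R_{i,j}$, each bihomogeneous of degree $d_{i}-j$ in $z_{k}$ and $j$ in $z_{k+1}$. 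These equations cut out an incidence subvariety $\Sigma\subset(\p^{N})^{\,l-1}$, and any point of $\Sigma$ produces a chain of lines of length at most $l$ through $x$ and $y$. Since the class computation below is independent of $x,y$, proving $\Sigma\neq\emptyset$ for every pair yields rational chain connectedness by chains of length $\le l$.

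First I would pin down the genuine conditions and count them. Because $G_{i}(x)=G_{i}(y)=0$, the coefficient $R_{i,0}$ of the first segment and $R_{i,d_{i}}$ of the last one vanish identically, while for an interior vertex the equation $G_{i}(z_{k})=0$ (forcing $z_{k}\in X$) is shared by two consecutive segments and must be counted once. This bookkeeping leaves exactly $l\sum_{i}d_{i}-m$ hypersurface conditions on the $(l-1)N$-dimensional space $(\p^{N})^{\,l-1}$, so the hypothesis $\sum_{i}d_{i}\le\frac{N(l-1)+m}{l}$ is precisely the statement that the expected dimension $(l-1)N-\bigl(l\sum_{i}d_{i}-m\bigr)$ is non-negative. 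Combined with $d_{i}\ge1$, hence $m\le\sum_{i}d_{i}$, the same inequality also gives $\sum_{i}d_{i}\le N$, which is needed below.

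The heart of the argument, and the step I expect to be the main obstacle, is that, unlike in a single projective space, non-emptiness of an intersection of hypersurfaces in a product is \emph{not} automatic: I must show that the product of their classes is nonzero in $A^{*}((\p^{N})^{\,l-1})\cong\Z[h_{1},\dots,h_{l-1}]/(h_{1}^{N+1},\dots,h_{l-1}^{N+1})$. The class of $\{R_{i,j}=0\}$ is $(d_{i}-j)h_{k}+j\,h_{k+1}$ for an interior segment, and a single power $h_{1}$ or $h_{l-1}$ for the equations of the two end segments. Expanding the product, a monomial $h_{1}^{a_{1}}\cdots h_{l-1}^{a_{l-1}}$ survives if and only if $a_{k}\le N$ for every $k$; since all coefficients $d_{i}-j$ (with $j\le d_{i}-1$) and $j$ (with $j\ge1$) are strictly positive, no cancellation occurs and it suffices to exhibit one such monomial. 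Interpreting the $a_{k}$ as loads, this becomes a transportation problem on the path $1,\dots,l-1$: the pure equations (the $m$ constraints $z_{k}\in X$ at each interior vertex, together with all equations of the two end segments) are forced onto a fixed factor, whereas the $\sum_{i}d_{i}-m$ mixed equations of each interior segment may be assigned to either of its two adjacent factors. Writing $\delta:=N-\sum_{i}d_{i}\ge0$ and filling the factors greedily from left to right, feasibility reduces to the inequality $(l-1)\delta\ge\sum_{i}d_{i}-m$, which is algebraically equivalent to the hypothesis $l\sum_{i}d_{i}\le(l-1)N+m$. This produces a surviving monomial, so $\prod[\{R_{i,j}=0\}]\neq0$ and hence $\Sigma\neq\emptyset$.

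Finally, for the last assertion, if $X$ is smooth the chain of lines of length $\le l$ joining the two general points is a connected nodal rational curve of degree $\le l$; by the standard smoothing of chains of rational curves on a smooth variety, recalled in the Introduction (cf. \cite[Proposition 3.2]{De}), it deforms, keeping $x$ and $y$ fixed, to an irreducible rational curve of degree $\le l$ through $x$ and $y$. Hence $X$ is rationally connected by rational curves of degree at most $l$.
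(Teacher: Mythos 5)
Your proposal is correct and follows essentially the same route as the paper: the same incidence variety in $(\p^{N})^{l-1}$, the same count of $l\sum_{i}d_{i}-m$ genuine conditions, and the same non-vanishing of the product of hypersurface classes in the Chow ring, with your greedy/transportation assignment of the mixed equations playing exactly the role of the paper's explicit choice $\overline{j_{k}}=\lfloor\frac{k}{l-1}(\sum_{i}d_{i}-m)\rfloor$. The only nit is the final citation: smoothing the chain of lines uses \cite[Proposition 4.24]{De}, as the paper does, not the bend-and-break statement \cite[Proposition 3.2]{De} quoted in the Introduction.
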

\begin{proof}
Let $x,y \in X$ be two general points. We can assume that $x=[1:0:\ldots:0]$ and $y=[0:\ldots:0:1]$. Moreover let us consider other $l-1$ points in $\p^N$, that we denote by $p^
i=[p^i_0:\ldots:p^i_N]$, for $i=1,\ldots,l-1$. Our goal is to find the set of $l-1$ points that will represent the intersections of the lines we are seeking to build the chain and we would like to see such set as a point of the cartesian product $\p^N_{1} \times \cdots \times \p^N_{l-1}$.\\
Let us consider the line that join the points $x$ and $p^1$, that we will denote by $ux+vp^1=[u+vp^1_0:\ldots:vp^1_N]$. We may observe that $G_i(ux+vp^1)$ is a polynomial of degree $d_i$ in the variables $u$ and $v$; it has $d_i+1$ coefficients, but in this case there is no monomial of the type $u^{d_i}$ because of our assumption $x \in X$. So imposing $G_i(ux+vp^1)\equiv 0$  gives us $d_i$ conditions, involving only on the coordinates of the point $p^1$.\\ 
Let us now consider the line  $up^1+vp^2=[up^1_0+vp^2_0:\ldots:up^1_N+vp^2_N]$. Again \mbox{$G_i(up^1+vp^2)$} is a homogeneous polynomial of degree $d_{i}$ in the variables $u,v$ and imposing $ G_i(up^1+vp^2) \equiv 0$ gives us $d_i + 1$ equations. Let us give a closer look to such equations; $d_i-1$ of them will be homogeneous polynomials in the coordinates of the points $p^1,p^2$, one of them will be a polynomial only in coordinates of the point $p^1$, which is exactly the equation $G_{i} = 0$ written in the coordinates of $p^{1}$ that we have already found in the previous step, and one of them will be a polynomial only in coordinates of the point $p^2$.\\ 
In the same way for every line $up^{i-1}+vp^i=[up^{i-1}_0+vp^i_0:\ldots:up^{i-1}_N+vp^i_N]$, for $i$ from $2$ to $l-1$, imposing $G_i(up^{i-1}+vp^i)\equiv 0$, we get $d_i+1$ equations; $d_i-1$ of them will be homogeneous polynomials in the coordinates of the points $p^{i-1},p^i$, one of them will be a polynomial only in the coordinates of the point $p^{i-1}$, which is the equation $G_{i} = 0$ written in the coordinates of $p^{i-1}$ that we have already found in the previous step, and one of them will be a polynomial only in coordinates of the point $p^i$.\\ 
We now consider the line $up^{l-1} + v y = [up^{l-1}_0:\ldots:up^{l-1}_N + v]$; we notice that $G_i(up^{l-1} + vy)$ is a polynomial of degree $d_i$ in the variables $u$ and $v$, it has $d_i+1$ coefficients, but in this case there is no monomial of the type $v^{d_i}$ because of our assumption $y \in X$. So imposing $G_i(up^{l-1} + vy)\equiv 0$  gives us $d_i$ conditions, only on the coordinates of the point $p^{l-1}$.\\ 
Summarizing, we get the following conditions: 
\begin{itemize}
\item[-] $\sum_{i=1}^m d_i$ homogeneous equations only in the $p^1_j$'s and $\sum_{i=1}^m d_i$ equations only in the $p^{l-1}_j$'s,
\item[-] $m$ homogeneous equations only in the $p^k_j$'s, for every $k=2,\ldots,l-2$,
\item[-] $\sum_{i=1}^m d_i - m$ bihomogeneous equations in the variables $p^{k-1}_j,p^k_j$'s, for every $k=2,\ldots,l-1$.
\end{itemize}
We want to perform the intersection of these hypersurfaces in $\p^{N}_{1}\times ...\times\p^{N}_{l-1}$. If $h_{i}$ is the hyperplane class of $\p^{N}_{i}$ the intersection is given by an expression of the form
$$h_{1}^{\sum_{i=1}^{m}d_{i}}h_{2}^{m}...h_{l-2}^{m}h_{l-1}^{\sum_{i=1}^{m}d_{i}}(h_{1}+h_{2})^{\sum_{i=1}^{m}d_{i}-m}...(h_{l-2}+h_{l-1})^{\sum_{i=1}^{m}d_{i}-m},$$
and each summand of the expression above is of the form
$$
\left.
\begin{array}{l}
h_{1}^{\sum_{i=1}^{m}d_{i}}h_{2}^{m}... h_{l-2}^{m}(h_{1}^{j_{1}}h_{2}^{\sum_{i=1}^{m}d_{i}-m-j_{1}})(h_{2}^{j_{2}}h_{3}^{\sum_{i=1}^{m}d_{i}-m-j_{2}})... (h_{l-2}^{j_{l-2}}h_{l-1}^{\sum_{i=1}^{m}d_{i}-m-j_{l-2}})\\
=h_{1}^{\sum_{i=1}^{m}d_{i}+j_{1}}h_{2}^{\sum_{i=1}^{m}d_{i}-j_{1}+j_{2}}h_{3}^{\sum_{i=1}^{m}d_{i}-j_{2}+j_{3}}... h_{l-2}^{\sum_{i=1}^{m}d_{i}-j_{l-3}+j_{l-2}}h_{l-1}^{2\sum_{i=1}^{m}d_{i}-m-j_{l-2}}.
\end{array}
\right.
$$
Our aim is to prove that under the numerical hypothesis of the theorem at least one of these summands does not vanish. Take 
$$\overline{j_{k}} = \lfloor \frac{k}{l-1}(\sum_{i=1}^{m}d_{i}-m)\rfloor, \:\:\:\mbox{for} \:\:k = 1,...,l-2,$$ 
where $\lfloor p \rfloor$ is the greatest integer smaller or equal than $p$.\\ Note that $0\leq\overline{j_{k}}\leq\sum_{i=1}^{m}d_{i}-m$ for $k = 1,...,l-2$. Consider the term
$$h_{1}^{\sum_{i=1}^{m}d_{i}+\overline{j_{1}}}h_{2}^{\sum_{i=1}^{m}d_{i}-\overline{j_{1}}+\overline{j_{2}}}h_{3}^{\sum_{i=1}^{m}d_{i}-\overline{j_{2}}+\overline{j_{3}}}... h_{l-2}^{\sum_{i=1}^{m}d_{i}-\overline{j_{l}}_{-3}+\overline{j_{l}}_{-2}}h_{l-1}^{2\sum_{i=1}^{m}d_{i}-m-\overline{j_{l}}_{-2}}.$$
For any $k = 2,...,l-2$ the exponent of $h_{k}$ is $\sum_{i=1}^{m}d_{i}-\overline{j_{k}}_{-1}+\overline{j_{k}}$. In order to ensure the intersection in $\p^{N}_{k}$ to be not empty we impose
\begin{center}
$N-\sum_{i=1}^{m}d_{i}+\overline{j_{k}}_{-1}-\overline{j_{k}}\geq0$ for any $k = 2,...,l-2$.
\end{center}
Substituting we have 
$$\lfloor\frac{k-1}{l-1}(\sum_{i=1}^{m}d_{i}-m)\rfloor\geq\lfloor\frac{k}{l-1}(\sum_{i=1}^{m}d_{i}-m)\rfloor -N+\sum_{i=1}^{m}d_{i}.$$ 
Since the number on the right is an integer it is enough to prove that $\frac{k-1}{l-1}(\sum_{i=1}^{m}d_{i}-m)\geq\lfloor\frac{k}{l-1}(\sum_{i=1}^{m}d_{i}-m)\rfloor -N+\sum_{i=1}^{m}d_{i}$ that is 
$$\frac{k-1}{l-1}(\sum_{i=1}^{m}d_{i}-m)+N-\sum_{i=1}^{m}d_{i}\geq \lfloor\frac{k}{l-1}(\sum_{i=1}^{m}d_{i}-m)\rfloor.$$ 
Consider now the term on the left, from our hypothesis we get $N\geq\frac{l\sum_{i=1}^{m}d_{i}-m}{l-1}$. So
$$
\left.
\begin{array}{l}
\frac{k-1}{l-1}(\sum_{i=1}^{m}d_{i}-m)+N-\sum_{i=1}^{m}d_{i}\geq\frac{k-1}{l-1}(\sum_{i=1}^{m}d_{i}-m)+\frac{l\sum_{i=1}^{m}d_{i}-m}{l-1}-\sum_{i=1}^{m}d_{i}\\
=\frac{k}{l-1}(\sum_{i=1}^{m}d_{i}-m)\geq\lfloor\frac{k}{l-1}(\sum_{i=1}^{m}d_{i}-m)\rfloor.
\end{array}
\right.
$$
This prove that the intersection in $\p^{N}_{k}$ is not empty for any $k = 2,...,l-2$.\\
Consider now the exponent of $h_{1}$. We have to impose
$$N-\sum_{i=1}^{m}d_{i}-\overline{j_{1}} = N-\sum_{i=1}^{m}d_{i}-\lfloor\frac{\sum_{i=1}^{m}d_{i}-m}{l-1}\rfloor\geq 0.$$
But $N-\sum_{i=1}^{m}d_{i}\geq\frac{\sum_{i=1}^{m}d_{i}-m}{l-1}$ implies the last inequality and is equivalent to the numerical hypothesis of the theorem. This show that the intersection in $\p^{N}_{1}$ is also not empty.\\
Finally consider the exponent of $h_{l-1}$ and impose
$$N-2\sum_{i=1}^{m}d_{i}+m+\overline{j_{l}}_{-2}\geq 0.$$
We have $\lfloor\frac{l-2}{l-1}(\sum_{i=1}^{m}d_{i}-m)\rfloor\geq 2\sum_{i=1}^{m}d_{i}-m-N$, since the number on the right is an integer it is enough to prove that $(l-2)(\sum_{i=1}^{m}d_{i}-m)\geq (l-1)(2\sum_{i=1}^{m}d_{i}-m-N)$ and again this is exactly our numerical hypothesis. We conclude that the intersection in $\p^{N}_{l-1}$ is not empty.\\
At this point we know that the equations define non-empty subvarieties of $\p^{N}_{j}$ for any $j = 1,...,l-1$. To ensure that these subvarieties lift to subvarieties of $\p^{N}_{1}\times ...\times\p^{N}_{l-1}$ rationally equivalent to cycles having non-empty intersection we force 
$$N(l-1)-(l-2)\sum_{i=1}^{m}d_{i}-2\sum_{i=1}^{m}d_{i}+m\geq 0,$$
which once again is our initial hypothesis.\\
So our system of equations on the product $\p^{N}_{1}\times ...\times\p^{N}_{l-1}$ has at least a solution, which represents the sequence of connection points in the chain of lines we were looking for. Finally, if $X$ is smooth, by general smoothing arguments (\cite{De}, Proposition 4.24) we can deform our chain in a rational curve of degree at most $l$ connecting $x$ and $y$.
\end{proof}

\subsection{Sharpness of Theorem \ref{RC}}\label{sha} The inequality in Theorem \ref{RC} is sharp. Consider a smooth hypersurface $X_{l+1}$ of degree $l+1$ in $\mathbb{P}^{l+2}$. Then $d = l+1$, $N = l+2$ and $m = 1$, so we have $\frac{N(l-1)+m}{l} = \frac{l^{2}+l-1}{l}$. Since
$$l\leq \frac{l^{2}+l-1}{l} < l+1$$
the hypersurface $X_{l+1}$ is a good candidate to prove sharpness. The equalities imply $d = l+1 = l+2-1 = N-1$ we have $\dim(\mathcal{L}_{x}) = 0$. Now by Theorem \ref{Wa} the dimension $d_{l} = \dim(\mathfrak{Loc}_{l}(x))$ is bounded by 
$d_{l}\leq l(\dim(\mathcal{L}_{x})+1) = l$. Since $\dim(X_{l+1}) = l+1 > l$ we have
$$\dim(\mathfrak{Loc}_{l}(x)) < \dim(X)$$
and $X$ is not connected by chains of length $l$ of lines.\\ 

From Theorem \ref{RC} we get the following Corollary which can also be found in \cite[Lemma 4.8.1]{Ko}.

\begin{Corollary}
Let $X\subset\p^{N}$ be a hypersurface of degree $d\leq N-1$. Then $X$ is rationally chain connected by a chain of lines of length at most $N-1$.
\end{Corollary}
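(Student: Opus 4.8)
The plan is to deduce this directly from Theorem \ref{RC} by specializing to the case of a single defining equation and choosing the chain length $l$ optimally. A hypersurface $X \subset \p^N$ of degree $d$ is set theoretically defined by a single homogeneous polynomial, so in the notation of Theorem \ref{RC} we have $m = 1$ and $d_1 = d$, whence $\sum_{i=1}^m d_i = d$. The strategy is then to select the integer $l$ for which the numerical criterion $d \leq \frac{N(l-1)+m}{l}$ of Theorem \ref{RC} is guaranteed precisely by the assumption $d \leq N-1$.

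First I would set $l = N-1$ and evaluate the right-hand side of the criterion. With $m = 1$ this gives
$$\frac{N(l-1)+m}{l} = \frac{N(N-2)+1}{N-1} = \frac{N^{2}-2N+1}{N-1} = \frac{(N-1)^{2}}{N-1} = N-1.$$
Thus for this choice of $l$ the hypothesis of Theorem \ref{RC} reads exactly $d \leq N-1$, which is precisely our assumption. Applying the theorem then yields that $X$ is rationally chain connected by chains of lines of length at most $l = N-1$, as claimed.

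The only point requiring care is the standing constraint $l \geq 2$ in Theorem \ref{RC}, which forces $N \geq 3$ for the choice $l = N-1$ to be admissible. For $N = 2$ the hypothesis $d \leq N-1 = 1$ means $X$ is a hyperplane, i.e.\ a line, where any two points are already joined by a single line; so the statement holds trivially with a chain of length $1 \leq N-1$. Consequently the hard part is not the argument itself --- which is a one-line specialization of Theorem \ref{RC} resting on the clean identity $\frac{N(N-2)+1}{N-1} = N-1$ --- but rather the observation that the asserted bound $N-1$ on the chain length is exactly the value of $l$ at which the general criterion becomes tight for a hypersurface of degree up to $N-1$.
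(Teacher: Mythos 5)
Your proposal is correct and takes essentially the same route as the paper: both specialize Theorem \ref{RC} to $m=1$, $l=N-1$ and use the identity $\frac{N(N-2)+1}{N-1}=N-1$ to see that the hypothesis $d\leq N-1$ is exactly the numerical criterion. Your additional remark covering the degenerate case $N=2$ (where $l=N-1$ violates $l\geq 2$) is a minor point the paper's proof passes over silently.
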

\begin{proof}
If $l = N-1$ the inequality $d\leq\frac{N(N-2)+1}{N-1} = \frac{N^{2}-2N+1}{N-1} = N-1$ of Theorem \ref{RC} is satisfied by hypothesis.
\end{proof}

\begin{Corollary}
Let $X\subset\p^{N}$ be a scheme theoretical complete intersection. If $\deg(X)\leq\frac{N(l-1)+c}{l}$
then $X$ is rationally chain connected by chains of lines of length at most $l$. 
If, in addition, $X$ is smooth and Fano of index $i_{X}\geq\frac{n+l}{l}$ then $X$ is rationally connected by rational curves of degree at most $l$.
\end{Corollary}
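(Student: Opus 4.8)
The plan is to reduce both assertions directly to Theorem \ref{RC} applied with $m = c$, using that a scheme-theoretical complete intersection of codimension $c$ is cut out by exactly $c$ homogeneous polynomials $G_{1},\ldots,G_{c}$ of degrees $d_{1},\ldots,d_{c}$, and is in particular set-theoretically defined by them, so Theorem \ref{RC} applies verbatim. The only two ingredients I need beyond that theorem are an elementary comparison between $\deg(X)=\prod_{i=1}^{c}d_{i}$ and the sum $\sum_{i=1}^{c}d_{i}$ that enters the hypothesis of Theorem \ref{RC}, and the adjunction formula for the Fano index of a complete intersection.

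For the first assertion, I would begin by recording that, since $X$ is non-degenerate, none of the defining polynomials can be linear, so $d_{i}\geq 2$ for every $i$. An elementary induction on $c$ then gives $\sum_{i=1}^{c}d_{i}\leq\prod_{i=1}^{c}d_{i}=\deg(X)$: the case $c=1$ is an equality, and the inductive step rests on the fact that the product of integers $\geq 2$ dominates their sum. Feeding this into the hypothesis $\deg(X)\leq\frac{N(l-1)+c}{l}$ yields $\sum_{i=1}^{c}d_{i}\leq\frac{N(l-1)+c}{l}$, which is precisely the numerical condition of Theorem \ref{RC} with $m=c$. Hence $X$ is rationally chain connected by chains of lines of length at most $l$.

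For the second assertion, I would translate the Fano hypothesis into the very same numerical condition. By adjunction a smooth complete intersection satisfies $-K_{X}=(N+1-\sum_{i=1}^{c}d_{i})H$, and, using that $\Pic(X)=\Z H$ for $\dim X\geq 3$ by Lefschetz (equivalently, the paper's prime-Fano convention), its index is $i_{X}=N+1-\sum_{i=1}^{c}d_{i}$. A direct manipulation, with $n=N-c$, shows that $i_{X}\geq\frac{n+l}{l}$ is equivalent to $\sum_{i=1}^{c}d_{i}\leq\frac{N(l-1)+c}{l}$: clearing the denominator $l$ turns the former into $N(l-1)+c\geq l\sum_{i=1}^{c}d_{i}$. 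Thus the Fano bound is exactly the hypothesis of Theorem \ref{RC}, and since $X$ is smooth the final clause of that theorem upgrades the chain of lines to a rational curve of degree at most $l$, giving rational connectedness.

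I do not expect a serious obstacle: the statement is essentially a repackaging of Theorem \ref{RC}. The only points requiring care are recording that non-degeneracy forces $d_{i}\geq 2$, so that the product dominates the sum in the first part, and invoking Lefschetz (or the prime-Fano convention) so that $i_{X}=N+1-\sum_{i=1}^{c}d_{i}$ and the adjunction computation in the second part is legitimate.
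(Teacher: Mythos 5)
Your proposal is correct and follows essentially the same route as the paper: the first assertion via the elementary inequality $\sum_{i=1}^{c}d_{i}\leq\prod_{i=1}^{c}d_{i}=\deg(X)$ fed into Theorem \ref{RC} with $m=c$, and the second via the adjunction formula $i_{X}=N+1-\sum_{i=1}^{c}d_{i}$, which you correctly verify is equivalent, under $i_{X}\geq\frac{n+l}{l}$, to the numerical hypothesis of that theorem. Your added justifications (non-degeneracy forcing $d_{i}\geq 2$ so that the product dominates the sum, and Lefschetz for the Picard group) are sound refinements of details the paper leaves implicit, but they do not change the argument.
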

\begin{proof}
The first assertion follows from the inequality $\sum_{i=1}^{c}d_{i}\leq\prod_{i=1}^{c}d_{i} = \deg(X)$ combined with Theorem \ref{RC}. If $X$ is a smooth, Fano, complete intersection, Theorem \ref{RC} and the equality $i_{X} = N+1-\sum_{i=1}^{c}d_{i}$ imply the second assertion.
\end{proof}

\begin{Proposition}
Let $X\subseteq\p^{N}$ be a smooth complete intersection defined by homogeneous polynomials $G_{i}$ of degree $d_{i}$, for $i = 1,..,c$, such that $\sum_{i=1}^{c}d_{i}\leq N-1$. Then
$$length(X) = \lceil \frac{N-c}{N-\sum_{i=1}^{c}d_{i}}\rceil,$$
where $\lceil k\rceil$ is the smallest integer greater or equal than $k$.
\end{Proposition}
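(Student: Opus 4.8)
The plan is to pinch $\length(X)$ between the upper bound furnished by Theorem~\ref{RC} and the lower bound furnished by Theorem~\ref{Wa}, and then to check that for a complete intersection the two bounds agree. Throughout write $D=\sum_{i=1}^{c}d_{i}$, so that $\dim X=n=N-c$. The first task is to read off the geometry of the lines on $X$. Restricting a form $G_{i}$ to a line through a general point $x\in X$ gives a binary form of degree $d_{i}$ one of whose coefficients vanishes because $x\in X$; thus each $G_{i}$ imposes $d_{i}$ conditions and $\mathcal{L}_{x}\subset\p^{n-1}$ is cut out by $D$ equations, whence $\dim\mathcal{L}_{x}=N-1-D$ and $\dim\mathcal{L}_{x}+1=N-D$. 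The hypothesis $D\leq N-1$ is precisely what forces $\dim\mathcal{L}_{x}\geq 0$, so $X$ is covered by lines. Since $X$ is a smooth complete intersection of dimension $n\geq 3$, the Lefschetz theorem gives $\Pic(X)=\Z\langle H\rangle$, and adjunction gives $-K_{X}=(N+1-D)H$ with $N+1-D\geq 2$, so $X$ is a prime Fano variety and Theorem~\ref{Wa} is available. Set $l_{0}=\lceil\frac{N-c}{N-D}\rceil=\lceil\frac{n}{\dim\mathcal{L}_{x}+1}\rceil$; the goal is $\length(X)=l_{0}$.

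For the bound $\length(X)\leq l_{0}$ I would apply Theorem~\ref{RC} with $l=l_{0}$ and $m=c$. A one-line rearrangement shows that its hypothesis $D\leq\frac{N(l-1)+c}{l}$ is equivalent to $l\geq\frac{N-c}{N-D}$, and this holds for $l=l_{0}$ by the definition of the ceiling. Hence $X$ is rationally chain connected by chains of length at most $l_{0}$, i.e. $\mathfrak{Loc}_{l_{0}}(x)=X$ for $x$ general.

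For the reverse bound $\length(X)\geq l_{0}$ I would invoke Theorem~\ref{Wa}, which bounds the dimension of the locus of chains of length $l_{0}-1$ by $(l_{0}-1)(\dim\mathcal{L}_{x}+1)=(l_{0}-1)(N-D)$. The ceiling gives $l_{0}-1<\frac{n}{N-D}$, hence $(l_{0}-1)(N-D)<n$, so $\dim\mathfrak{Loc}_{l_{0}-1}(x)<\dim X$ and no chain of length $l_{0}-1$ joins two general points. Together with the previous paragraph this is exactly the definition of $X$ having length $l_{0}$, so $\length(X)=l_{0}$.

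The numerical manipulations (the equivalence of the hypothesis of Theorem~\ref{RC} with $l\geq\frac{N-c}{N-D}$ and the two estimates coming from the ceiling) are routine. The step that I expect to require genuine care is the equality $\dim\mathcal{L}_{x}=N-1-D$: the inequality $\dim\mathcal{L}_{x}\geq N-1-D$ is automatic since $\mathcal{L}_{x}$ is cut out by $D$ equations in $\p^{n-1}$, but the reverse inequality---indispensable for making the estimate of Theorem~\ref{Wa} tight enough to force $\dim\mathfrak{Loc}_{l_{0}-1}(x)<n$---amounts to the independence of these $D$ conditions for general $x$, which is where the smoothness of the complete intersection enters (equivalently, the normal bundle computation $\deg\mathcal{N}_{L/X}=(N-1)-D$ recalled in the preliminaries). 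The degenerate cases should also be addressed: when every $d_{i}=1$ one has $l_{0}=1$ and $X$ is a linear subspace, for which the statement is immediate, while the low-dimensional cases $n\leq 2$ fall outside the hypotheses of Theorem~\ref{Wa} and require a direct argument.
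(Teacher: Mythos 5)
Your proposal is correct and takes essentially the same route as the paper: pinching $\operatorname{length}(X)$ between the upper bound from Theorem~\ref{RC} (whose hypothesis you correctly rewrite as $l\geq\frac{N-c}{N-\sum_{i=1}^{c}d_{i}}$) and the lower bound from Theorem~\ref{Wa} together with $\dim(\mathcal{L}_{x})=N-\sum_{i=1}^{c}d_{i}-1$. If anything you improve on the paper's write-up: your single estimate $l_{0}-1<\frac{n}{N-\sum_{i=1}^{c}d_{i}}$ subsumes the paper's two-case split (according to whether $\frac{N-c}{N-\sum_{i=1}^{c}d_{i}}-1$ is an integer), and you make explicit the points the paper leaves tacit, namely the verification of the hypotheses of Theorem~\ref{Wa} (prime Fano via Lefschetz, $n\geq 3$, the degenerate cases) and the fact that $\dim(\mathcal{L}_{x})\leq N-1-\sum_{i=1}^{c}d_{i}$ for general $x$ is the genuinely nontrivial ingredient, justified by the normal-bundle computation recalled in the preliminaries.
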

\begin{proof}
Since the integer $l_{min} := \lceil \frac{N-c}{N-\sum_{i=1}^{c}d_{i}}\rceil$ satisfies the inequality of Theorem \ref{RC} we have that $X$ is $l_{min}$-chain connected. We have to prove that $X$ is not $(l_{min}-1)$-chain connected. Now $l_{min}-1 = \lceil \frac{N-c}{N-\sum_{i=1}^{c}d_{i}}-1\rceil$. Note that $\dim(\mathcal{L}_{x}) = N-\sum_{i=1}^{c}d_{i}-1\geq 0$. By Theorem \ref{Wa} we have $d_{l_{ min}-1}\leq (l_{min}-1)(\dim(\mathcal{L}_{x})+1)$, we distinguish two cases
\begin{itemize}
\item[-] If $\frac{N-c}{N-\sum_{i=1}^{c}d_{i}}-1$ is an integer then
$$d_{l_{ min}-1}\leq (\frac{N-c}{N-\sum_{i=1}^{c}d_{i}}-1)(N-\sum_{i=1}^{c}d_{i}) = \sum_{i=1}^{c}d_{i}-c < N-c = n.$$
\item[-] If $\frac{N-c}{N-\sum_{i=1}^{c}d_{i}}-1$ is not an integer then 
$$d_{l_{ min}-1}\leq (l_{min}-1)(N-\sum_{i=1}^{c}d_{i}) < \frac{N-c}{N-\sum_{i=1}^{c}d_{i}}(N-\sum_{i=1}^{c}d_{i}) = n.$$
\end{itemize} 
Then $d_{l_{min}-1} = \dim(\mathfrak{Loc_{l_{min}-1}}(x)) < \dim(X)$.
\end{proof}

\begin{Remark}
In the case $l = 2$ we find again \cite[Theorem 4.4]{MMT}, in fact, the inequality in \ref{RC} simply becomes $\sum_{i=1}^{m}d_{i}\leq \frac{N+m}{2}$.
\end{Remark}

\begin{Remark}
In the range of Theorem \ref{RC} $X$ is covered by lines. Indeed under the numerical hypothesis of Theorem \ref{RC} we have $m < \sum_{i=1}^{m}d_{i}\leq \frac{N(l-1)+m}{l}$ which gives $m < N$. So we get the inequality   
$$\sum_{i=1}^{m}d_{i}\leq \frac{N(l-1)+m}{l}< N,$$
which forces $X$ to be covered by lines.
\end{Remark}

\subsubsection*{Counting chains of lines}
We discuss now an example that shows how it is possible to count the number of possible chains of lines when the equality in Theorem \ref{RC} holds.\\
Let us consider a cubic threefold $X\subset\p^{4}$. In this case the equality holds when $l=3$, so we are looking for all the possible $3$-chains of lines connecting two general points $x,y$ of $X$. Following the proof of the theorem, we have to perform intersection in $\p^{4}_{1}\times\p^{4}_{2}$, we are looking for two points $p^{1},p^{2}$. We have $3$ conditions on the coordinates of $p^{1}$ namely $h_{1},2h_{1},3h_{1}$, describing the cone of lines in $X$ through $x$. Furthermore we have $3$ other conditions $h_{2},2h_{2},3h_{2}$ on the coordinates of $p^{2}$, describing the cone of lines in $X$ through $y$. Finally we have $2$ conditions involving the coordinates of both points $p^{1},p^{2}$, namely $2h_{1}+h_{2}$ and $h_{1}+2h_{2}$. Their intersection product is given by
$$h_{1}2h_{1}3h_{1}h_{2}2h_{2}3h_{2}(2h_{1}+h_{2})(h_{1}+2h_{2}) = 36h_{1}^{3}h_{2}^{3}(2h_{1}^{2}+5h_{1}h_{2}+2h_{2}^{2}) = 180h_{1}^{4}h_{2}^{4},$$
and we conclude that we have $180$ possibilities.\\
A geometrical description of this fact is the following: there are exactly $6 = h_{1}2h_{1}3h_{1}$ lines in $X$ through $x$ and $6 = h_{2}2h_{2}3h_{2}$ lines in $X$ through $y$. Take a line $L_{x}$ of the first family and a line $L_{y}$ of the second. These lines are skew otherwise $X$ would be connected by singular conics and we know this is not possible by classical arguments of projective geometry, so $L_{x}$ and $L_{y}$ generate a $3$-plane $H$. The linear section $H\cap X:=S$ is a smooth cubic surface in $\p^{3}$ and we can consider the lines $L_{x},L_{y}$ as two exceptional divisors of a proper blow-up of the projective plane in $6$ points; we denote by $p,q$ the points in the plane which are blown-up in $L_{x}$ and $L_{y}$. There are exactly $5$ lines joining $L_{x}$ and $L_{y}$ namely the strict transform of the line $\langle p,q\rangle$ and of the conics passing through $p,q$ and $3$ of the $4$ remaining points. In conclusion we have $6\cdot 6\cdot 5 = 180$ possibilities, that double-checks the counting made before.              

\subsubsection*{Acknowledgements}
The authors heartily thank \textit{Prof. Paltin Ionescu} and \textit{Dr. Jos$\acute{e}$ Carlos Sierra} for the introduction to the subject, many helpful comments and suggestions. We would like to give a special thank also to \textit{Prof. Enrique Arrondo} and \textit{Prof. Massimiliano Mella}.

\end{document}